\documentstyle[amssymb,amsfonts]{amsart}

\newenvironment{proof}{\noindent {\bf Proof} }{\endprf\par}
\def \endprf{\hfill  {\vrule height6pt width6pt depth0pt}\medskip}


\parindent = 0 pt
\parskip = 12 pt

\theoremstyle{plain}
  \newtheorem{theorem}[subsection]{Theorem}
  
  \newtheorem{proposition}[subsection]{Proposition}
  \newtheorem{lemma}[subsection]{Lemma}
  \newtheorem{corollary}[subsection]{Corollary}

\theoremstyle{remark}

\theoremstyle{definition}

\include{psfig}

\begin{document}

\title[Algebraic methods in discrete analogs of the
 Kakeya problem]{Algebraic methods in discrete analogs of the Kakeya problem}

\author{Larry Guth}
\address{Department of Mathematics, University of Toronto}
\email{ lguth@@math.toronto.edu}

\author{Nets Hawk Katz}
\address{Department of Mathematics, Indiana University, Bloomington}
\email{nhkatz@@indiana.edu}

\vspace{-0.3in}

\begin{abstract} We prove the joints conjecture, showing that for
any $N$ lines in ${\Bbb R}^3$, there are at most $O(N^{{3 \over
2}})$ points at which 3 lines intersect non-coplanarly. We also
prove a conjecture of Bourgain showing that given $N^2$ lines in
${\Bbb R}^3$ so that no $N$ lines lie in the same plane and so
that each line intersects a set $P$ of points in at least $N$
points then the cardinality of the set of points is
$\Omega(N^3)$.  Both our proofs are adaptations of Dvir's
argument for the finite field Kakeya problem.
\end{abstract}

\maketitle

\section{Introduction}

Various authors have considered the joints problem. It asks, given
$N$ lines in space, how many ``joints" can the lines form, where
a joint is defined as a point where three lines with linearly
independent directions intersect. Obviously given a $\sqrt{ {N
\over 3}} \times \sqrt{ {N \over 3}} \times \sqrt{ {N \over 3}}$
cube in the integer lattice, we get $N$ lines with ${N^{{3 \over
2}} \over 3 \sqrt{3}}$ joints by simply taking all lines in
coordinate directions which intersect the cube and the lattice.
The joints problem is to prove:

\begin{theorem} \label{joints} Any set of $N$ lines in ${\Bbb
R}^3$ form at most $O(N^{{3 \over 2}})$ joints. \end{theorem}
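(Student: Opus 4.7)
The plan is to adapt Dvir's polynomial method. Suppose for contradiction that $J \geq C N^{3/2}$ joints are formed by $N$ lines, where $C$ is a large constant to be chosen. The core strategy is to produce a nonzero polynomial $P$ of controlled degree that vanishes on every joint, then exploit the non-coplanarity to derive a contradiction via minimality.

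First I would carry out a pruning step to guarantee that every remaining line contains many joints. The idea is to iteratively remove any line that meets fewer than roughly $J/(2N)$ of the currently surviving joints; each such removal destroys fewer than $J/(2N)$ joints, so after at most $N$ removals we lose at most $J/2$ joints in total. What remains is a subfamily $\mathcal{L}'$ whose joint set $J'$ satisfies $|J'| \geq J/2$, and with the crucial property that \emph{every} line of $\mathcal{L}'$ passes through at least $|J'|/N$ joints of $J'$.

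Next, a dimension-counting argument produces a nonzero polynomial $P$ of minimal degree $d$ vanishing on all of $J'$: since the space of polynomials of degree $\leq d$ in three variables has dimension $\binom{d+3}{3} \sim d^3/6$, one can take $d \leq C' |J'|^{1/3}$. By choosing the original constant $C$ large enough, this yields $d < |J'|/N$, so $d$ is strictly less than the number of joints on each surviving line. Since a degree-$d$ polynomial restricted to a line is either identically zero or has at most $d$ roots, $P$ must vanish identically on every line of $\mathcal{L}'$. At each joint of $J'$, three lines of $\mathcal{L}'$ meet with linearly independent directions, so $P$ vanishing on all three lines forces the directional derivatives in three independent directions to vanish at the joint, hence $\nabla P \equiv 0$ on $J'$. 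But some partial derivative $\partial_i P$ is a nonzero polynomial of degree $d-1$ vanishing on all of $J'$, contradicting the minimality of $d$.

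The main technical point to get right is calibrating the pruning threshold and the polynomial degree so that the inequality $d < |J'|/N$ holds; once those parameters are coordinated, the rest is formal. Conceptually the genuinely new ingredient, which I expect to be the hardest step to discover rather than to verify, is the passage from ``$P$ vanishes on the joint'' to ``$P$ vanishes on the entire lines through the joint''—it is this upgrade, together with the non-coplanarity hypothesis forcing $\nabla P$ to vanish, that turns Dvir's finite-field trick into a statement about lines in $\mathbb{R}^3$.
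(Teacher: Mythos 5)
Your argument is correct, and it is genuinely different from — and considerably simpler than — the proof given in this paper. The paper's proof proceeds by three-coloring the line--joint incidences, repeatedly applying a popularity lemma, randomly sampling a sparse subset of lines, building a polynomial vanishing on points of that sample, factoring into irreducibles, pigeonholing to select a single irreducible factor $p_j$ vanishing on many joints, and finally invoking a Bezout-type statement (Proposition \ref{nottoocritical}) to show $p_j$ cannot have too many critical lines. Your proof replaces all of that machinery with two elementary moves: a greedy pruning to make every surviving line rich in joints, and then the observation that a \emph{minimal-degree} vanishing polynomial cannot have its whole gradient vanish on the joint set, since a nonzero partial derivative would be a lower-degree polynomial vanishing on the same set. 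In particular you never need irreducibility, resultants, Bezout, or random sampling; the noncoplanarity hypothesis enters only through the trivial linear-algebra fact that if $P$ vanishes on three lines through $a$ with independent directions then $\nabla P(a)=0$. This is essentially the streamlined argument found independently by Kaplan--Sharir--Shustin and by Quilodr\'an shortly after this paper appeared, and it is now the standard proof of the joints theorem. One small bookkeeping point worth making explicit: after pruning with threshold $J/(2N)$, each surviving line meets at least $J/(2N)$ surviving joints (not $|J'|/N$ as you wrote), and the inequality you actually need is $d \lesssim |J'|^{1/3} \le J^{1/3} < J/(2N)$, which holds precisely when $J \gtrsim N^{3/2}$ with a large enough implied constant — so the numerology closes, but the clean way to say it is in terms of $J$, not $|J'|$.
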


The previous best bound in the joints problem is due to Feldman
and Sharir \cite{FS}, who proved that the number of joints is
$O(N^{1.6232})$. Earlier bounds were reported in \cite{SW},
\cite{S}, and \cite{CEGPSSS}.
Bennett,Carbery, and Tao obtained a result conditioned on the
angles at the joints in \cite{BCT}.

At AIM in 2004, Bourgain conjectured the following:

\begin{theorem} \label{Bourgain} Let $L$ be a set of $N^2$ lines
in ${\mathbb{R}}^3$ and let $P$ be a set of points in
${\mathbb{R}}^3$. Suppose no more than $N$ lines of $L$ lie in the
same plane and suppose the each line of $L$ contains at least $N$
points of $P$. Then $|P|=\Omega(N^3)$.
\end{theorem}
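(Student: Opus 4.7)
The plan is to follow the polynomial-method template of Dvir, as the abstract advertises. Suppose for contradiction that $|P| \le c N^3$ for a small absolute constant $c$ to be chosen. The space of polynomials on $\R^3$ of degree at most $D$ has dimension $\binom{D+3}{3}$, which exceeds $|P|$ once $D$ is a sufficiently small multiple of $N$; taking $D = \lceil (6c)^{1/3} N \rceil$ supplies a nonzero $p$ of degree at most $D$ vanishing on $P$, and for $c$ small we may assume $D < N$. Since every line $\ell \in L$ contains at least $N > D$ points of $P$ while $p|_\ell$ is a univariate polynomial of degree $\le D$, the polynomial $p$ vanishes identically on $\ell$. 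Hence all $N^2$ lines lie in $Z(p)$.

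Next I would factor $p = p_1 p_2 \cdots p_k$ into $\R$-irreducibles, with $\sum_i \deg p_i \le D$, and assign each line of $L$ to some component $Z(p_i)$ containing it. Each linear factor corresponds to a plane, and by hypothesis such a plane accommodates at most $N$ lines of $L$, so the linear factors absorb at most $D \cdot N$ lines in total. Since $D < N$, the strict inequality $DN < N^2$ forces at least one irreducible factor of degree $d_i \ge 2$ to carry many of the remaining lines of $L$.

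The crux is therefore to bound, for each irreducible non-planar component $Z(p_i)$ of degree $d_i$, the number of lines of $L$ lying on it. I would seek a structural dichotomy: for non-ruled surfaces, the classical Cayley--Salmon / flecnode theory bounds the number of lines on $Z(p_i)$ by $O(d_i^2)$; for ruled surfaces, the lines form essentially a one-parameter family, and one uses the coplanar hypothesis together with the observation that two intersecting lines span a plane to limit the subfamily of rulings realized in $L$. The aim is a per-component bound of the form $C(d_i N + d_i^2)$, which sums to $C(DN + D^2) \ll N^2$ when $c$ is small, producing the desired contradiction.

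The principal obstacle is the ruled surface case. A ruled surface can carry infinitely many lines, so the hypothesis that no plane contains more than $N$ lines of $L$ must be used in an essential way rather than merely as a book-keeping device. One must argue that the $L$-lines on a ruled component $Z(p_i)$ interact with planes in a way controlled by $d_i$ and $N$; analyzing the curve in the Grassmannian that parameterizes the rulings, and tracking how ``coplanar clusters'' of rulings concentrate, is where the actual algebro-geometric work lies, and the constants from this analysis must track cleanly through the parameter count above.
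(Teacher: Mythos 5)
Your proposal diverges from the paper's argument in a crucial way, and the divergence is exactly where the real gap lies. You begin as the paper does --- assume $|P|\le cN^3$, find a polynomial $p$ of degree $D<N$ vanishing on $P$, conclude every line of $L$ lies in $Z(p)$, and factor $p$ into irreducibles. The paper, however, does \emph{not} at any point try to bound the number of lines of $L$ sitting on a single irreducible non-planar component; your proposal does, and that bound is false. A non-degenerate quadric (degree $2$) already contains a one-parameter family of pairwise-skew rulings, so it can carry $N^2$ lines of $L$ with no two of them coplanar --- the hypothesis ``no $N$ lines in a plane'' is vacuous for such a family and cannot rescue the estimate $C(d_iN+d_i^2)$. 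The only thing that rules out this configuration is the point count: pairwise-skew lines carrying $N$ points each would force $|P|\ge N^3$. In other words, the planar hypothesis and the bound on $|P|$ must be used \emph{together}, and your per-component line count uses only the former. Cayley--Salmon/flecnode arguments face the same obstruction in the ruled case, which is precisely the case you flag as ``the principal obstacle'' and leave open.

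The paper sidesteps this by never counting lines on a component. Instead it counts \emph{incidences}. After pigeonholing (first to ``valuable'' points lying on many lines, then dyadically on the multiplicity, then to a single irreducible factor $p_k$), it focuses on points of $Z(p_k)$ through which at least three of the selected lines pass. Each such point is either a critical point of $p_k$ or, by Lemma~\ref{flatpoints}, a \emph{flat} point, meaning the algebraic second fundamental form ${\bf II}(p_k)$ vanishes there. This converts an incidence surplus into either many critical lines --- contradicting Proposition~\ref{nottoocritical} via Bezout --- or many flat lines --- forcing $Z(p_k)$ to be a plane by Corollary~\ref{it'saplane!}, which then clashes with the ``no $N$ lines in a plane'' hypothesis. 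No ruled/non-ruled dichotomy and no Grassmannian analysis are needed; the second fundamental form is the replacement for the flecnode machinery, and the pigeonholing on points (not lines) supplies the concentration that your argument is missing. To repair your approach you would need to convert your line count into an incidence count and explain where the extra incidences go --- at which point you would essentially be rediscovering the critical/flat dichotomy.
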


The previous best bound in Bourgain's incidence problem is due to
Solymosi and Toth \cite{ST}, who proved that the number of points is
$\Omega(N^{11/4})$.

What both conjectures have in common is that they are discrete
models of the Kakeya problem. Work of Sharir on joints helped
inspire Schlag's program on Kakeya, see \cite{Sch}. Bourgain's conjecture was
posed with the analogy to Kakeya in mind.

In 2008, Dvir (\cite{D}) solved the finite field version of the Kakeya
problem. His technique was to study the properties of a
polynomial which vanishes on the Kakeya set. We adapt this
technique, proving the above theorems through a study of a
polynomial which vanishes on the point sets in question. 

The main idea of both proofs is as follows. We find a polynomial of as low degree
as possible which vanishes on the set of joints ({\it resp.} points) in question.
We factorize the polynomial to irreducibles and find an irreducible factor
vanishing on many joints(points). That irreducible factor will also vanish on many
lines. In the event that the gradient, too, vanishes on many lines, a variant of
Bezout's theorem leads to a contradiction by reducing our irreducible. Otherwise 
at each point where such lines intersect, all the lines lie in the tangent plane.
In the joints problem this leads to an immediate contradiction, since intersections
must be non-coplanar. In the Bourgain problem, this leads to many flat points
of the zero set of the irreducible, which force that zero set to be a plane. This
contradicts the hypothesis of fewer than $N$ lines in a plane. The idea of planiness,
that in Kakeya problems, lines at a given point of intersection lie mainly in a plane,
seems first to have arisen in the work of the second author with Laba and Tao \cite{KLT}.
The idea that this plane is the tangent space to a polynomial vanishing on the set
comes from the work of the first author on the endpoint multilinear Kakeya problem
\cite{G}. 

We have tried to minimize the amount of algebra background needed for the paper.  The
small amount of algebra we use is summarized in the next section with references.
The main ingredient is Bezout's theorem.  In order to minimize the algebra, we focus
on irreducible polynomials, and we use pigeonhole estimates to locate an irreducible
polynomial that vanishes on many joints or points.  It is also possible to give a proof
using reducible polynomials.  Such a proof would need less pigeonhole estimates, but
it would require more algebra.

{\bf Acknowledgements:} The second author would like to thank the University of Toronto
and the Fields Institute for their hospitality which allowed this collaboration to
take place. He would also like to thank his colleague Michael Larsen for a useful
conversation about resultants. He is partially supported by NSF grant DMS 0653763.

\vskip.125in

\section{Algebraic Preliminaries}

In this section, we bring together various algebraic facts that
we shall need. Good references are the books of Cox, Little, and
O'Shea. (\cite{CLO},\cite{CLOU})

We recall a fundamental object, the {\it resultant} of two polynomials. Given
$f$ and $g$ elements of ${\Bbb C}[x]$ having degree $l$ and $m$ respectively,
and given as
$$f(x)=a_l x^l + a_{l-1} x^{l-1} + \dots + a_0,$$
and
$$g(x)=b_m x^m + b_{m-1} x^{m-1} + \dots + b_0,$$
we define the resultant of $f$ and $g$, namely $Res(f,g)$ as the determinant of
the $l+m  \times  l+m$ matrix whose coefficients $c_{ij}$ satisfy
$c_{ij} =a_{j-i}$ if $1 \leq i \leq m$ and $i \leq j \leq i+l$, satisfy
$c_{ij}= b_{j-i +m}$ if $m+1 \leq i \leq m+l$ and $i-m \leq j \leq i-m+l$
and are equal to zero otherwise.

The columns of the matrix $c_{ij}$ represent coefficients of the polynomial
$f$ multiplied by $x^j$ where $j$ runs from 0 to $m-1$ and the
coefficents of the polynomial $g$ multiplied by $x^k$  where $k$ runs
from 0 to $l-1$. The resultant tests whether this set of polynomials is linearly
independent. Linear independence fails exactly when $f$ and $g$ have a common
factor. (The resultant was first defined in this way by Sylvester.) We bring together
some basic properties of the resultant following \cite{CLOU}.

Now suppose instead that we work with polynomials $f,g \in {\Bbb C}[x_1, \dots , x_n]$.
We may view them as polynomials in $x_1$ with coefficients that are polynomials
in $x_2,\dots, x_n$. Then we denote the resultant, a polynomial in $x_2,\dots,x_n$ as
$Res(f,g;x_1)$. In fact, computing resultants is all we need to do in order to
determine whether polynomials in several variables have a common factor.

\begin{theorem} \label{factor} Let $f,g \in {\Bbb C}[x_1, \dots , x_n]$ and suppose
that both $f$ and $g$ have positive degree when viewed as polynomials in $x_1$ then
$f$ and $g$ have a common factor if and only if $Res(f,g;x_1)$ is identically zero.
\end{theorem}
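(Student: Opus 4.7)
The plan is to reduce the multivariable statement to the one-variable case by passing to the fraction field, then transfer common factors back using Gauss's lemma.

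First, I would set $R = \mathbb{C}[x_2, \ldots, x_n]$ and $K = \mathbb{C}(x_2, \ldots, x_n)$. Since $f, g$ have positive $x_1$-degree by hypothesis, they can legitimately be viewed as elements of $K[x_1]$, and the Sylvester matrix defining $\operatorname{Res}(f, g; x_1)$ has entries in $R \subset K$, so its determinant is unchanged whether we compute it in $R$ or $K$. I would then invoke the one-variable version of the theorem (which should either be quoted from \cite{CLOU} or proved by the classical linear-algebra argument: the Sylvester matrix represents the $K$-linear map $(A, B) \mapsto Af + Bg$ from polynomials of bounded degree, and its kernel is nontrivial exactly when $f, g$ admit a nontrivial common factor in $K[x_1]$). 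This immediately gives: $\operatorname{Res}(f, g; x_1) = 0$ iff $f$ and $g$ share a common factor of positive degree in $K[x_1]$.

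Next, I would translate between a common factor in $K[x_1]$ and one in $\mathbb{C}[x_1, \ldots, x_n] = R[x_1]$. One direction is trivial: any common factor in $R[x_1]$ of positive $x_1$-degree remains a common factor in $K[x_1]$. For the converse, suppose $h \in K[x_1]$ is a common factor of positive degree; clear denominators from $R$ to obtain $\tilde h \in R[x_1]$ with the same roots in $x_1$ (over $K$), and use the fact that $R$ is a UFD so Gauss's lemma applies to $R[x_1]$: writing $f = h \cdot q_1$ and $g = h \cdot q_2$ in $K[x_1]$ and clearing denominators, one extracts a primitive common factor of $f$ and $g$ in $R[x_1]$ of positive $x_1$-degree. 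This is a genuine common factor of $f$ and $g$ in $\mathbb{C}[x_1, \ldots, x_n]$.

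Combining the two equivalences gives the statement. The main technical obstacle is the Gauss's lemma step, namely being careful that when one clears denominators the resulting factor in $R[x_1]$ still has positive $x_1$-degree (it does, because clearing denominators only multiplies by elements of $R$, which lie in degree $0$ in $x_1$) and genuinely divides $f$ and $g$ in $R[x_1]$ rather than only in $K[x_1]$. Everything else is essentially a translation of the well-known one-variable fact.
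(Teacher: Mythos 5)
The paper does not prove Theorem \ref{factor}: it is quoted directly from Cox--Little--O'Shea (cited as \S 3.6, Proposition 1(ii) in \cite{CLOU}), so there is no in-paper argument to compare against. Your proof is a correct reconstruction of the standard argument behind that citation: localize to $K = \mathbb{C}(x_2,\dots,x_n)$, observe that the Sylvester determinant is unchanged because its entries already lie in $R = \mathbb{C}[x_2,\dots,x_n] \subset K$, invoke the one-variable characterization via the Sylvester map $(A,B) \mapsto Af + Bg$ over the field $K$, and then descend the common factor from $K[x_1]$ back to $R[x_1]$ by Gauss's lemma, using that $R$ is a UFD. You correctly flag the one genuinely fussy point, namely that clearing denominators and passing to a primitive representative preserves positive $x_1$-degree, since denominators and contents live in $R$ and hence in $x_1$-degree zero.

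One thing worth making explicit: what your argument (and CLOU) actually establishes is the equivalence of $Res(f,g;x_1) \equiv 0$ with a common factor \emph{of positive degree in $x_1$}. The paper's statement drops that qualifier, and as literally written the ``only if'' direction is false: take $f = x_2 x_1$ and $g = x_2(x_1 + 1)$, which share the common factor $x_2$ yet have $Res(f,g;x_1) = -x_2^2 \not\equiv 0$. This does not affect the paper, which only ever uses the ``if'' direction (resultant vanishes identically $\Rightarrow$ a nontrivial common factor exists) in Corollaries \ref{Bezout} and \ref{BezoutlinesC}, and your proof delivers that direction with the stronger conclusion that the common factor has positive $x_1$-degree. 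But if you want the biconditional, you should restore the qualifier as CLOU does.
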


Theorem \ref{factor} is \S 3.6 proposition 1 (ii) in \cite{CLOU}.

\begin{proposition} \label{Bezoutmain}  Let $f$ and $g$ be elements of ${\Bbb C}[x_1,x_2]$
and suppose that $f$ and $g$ have degrees $l$ and $m$ respectively.  Furthermore,
assume that $f$ has degree $l$ in $x_1$ and $g$ has degree $m$ in $x_1$.  Then $Res(f,g;x_1)$ is 
a polynomial of $x_2$ of degree at most $lm$. \end{proposition}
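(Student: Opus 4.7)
The plan is to use the Leibniz expansion of the Sylvester determinant together with the simple observation that the $x_1$-coefficients of $f$ and $g$ have already used up most of their total-degree budget. I would first record the key bound on the coefficients. Writing $f = \sum_{k=0}^l a_k(x_2)\, x_1^k$, any monomial $x_1^k x_2^j$ appearing in $f$ must satisfy $k + j \le l$, so $\deg_{x_2} a_k \le l - k$; analogously $\deg_{x_2} b_k \le m - k$.

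Translated into the Sylvester matrix $M=(c_{ij})$ described just before the proposition, this gives
\[
\deg_{x_2} c_{ij} \le l - (j-i) \quad \text{for } 1 \le i \le m,
\qquad
\deg_{x_2} c_{ij} \le i - j \quad \text{for } m+1 \le i \le m+l,
\]
(with the convention that a zero entry contributes $0$ to a product). Then I would plug this into the permutation-expansion
\[
Res(f,g;x_1) \;=\; \sum_{\sigma \in S_{m+l}} \mathrm{sgn}(\sigma)\, \prod_{i=1}^{m+l} c_{i,\sigma(i)}
\]
and bound the $x_2$-degree of each product:
\[
\deg_{x_2} \prod_{i} c_{i,\sigma(i)} \;\le\; \sum_{i=1}^{m}\bigl(l - \sigma(i) + i\bigr) + \sum_{i=m+1}^{m+l}\bigl(i - \sigma(i)\bigr)
\;=\; lm + \sum_{i=1}^{m+l} i - \sum_{i=1}^{m+l} \sigma(i).
\]
Since $\sigma$ is a permutation of $\{1,\dots,m+l\}$, the two sums on the right cancel, leaving $lm$. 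Taking the maximum over $\sigma$ yields $\deg_{x_2} Res(f,g;x_1) \le lm$.

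There isn't really a hard step here: the proof is a bookkeeping exercise that exploits the telescoping $\sum_i \sigma(i) = \sum_i i$. The only place to be a bit careful is to confirm that using the stated degree bounds on the $a_k, b_k$ does not break down on entries of the matrix that are forced to be $0$ by the index constraints $i \le j \le i+l$ and $i-m \le j \le i$; but vanishing entries only make the product vanish, which certainly respects the degree bound. Thus the argument goes through verbatim.
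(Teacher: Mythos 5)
Your proof is correct and takes a genuinely different route from the paper's. Both arguments begin from the same crucial observation, namely that $\deg_{x_2} a_k \le l-k$ and $\deg_{x_2} b_k \le m-k$, which follows because each monomial of $f$ (resp.\ $g$) has total degree at most $l$ (resp.\ $m$). Where the two proofs part ways is in how this is converted into a degree bound on the determinant. The paper appeals to the quasi-homogeneity of the resultant as a polynomial in the coefficients: using the product formula $Res(f,g)=\prod_{j,k}(r_j-s_k)$ over the roots, it assigns weight $l-i$ to $a_i$ and $m-i$ to $b_i$, concludes that $Res$ is weighted-homogeneous of weight $lm$ in those variables, and then substitutes the $x_2$-degree bounds for the weights. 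You bypass the root factorization entirely: you expand the Sylvester determinant via the Leibniz formula and bound the $x_2$-degree of each permutation product directly, exploiting the telescoping identity $\sum_i \sigma(i)=\sum_i i$. Your version is somewhat more elementary (no product formula, no implicit passage to the monic case in which that formula is stated) and is in effect a direct, hands-on derivation of the same quasi-homogeneity fact that the paper invokes; the paper's version is a bit more conceptual in isolating weighted homogeneity as the structural reason the bound holds. Both are correct and of comparable length.
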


\begin{proof} Given two polynomials of one variable, 
$$f(x)=(x-r_1) (x-r_2) \dots  (x-r_l),$$
and 
$$g(x)=(x-s_1) (x-s_2) \dots (x-s_m),$$
we have that
$$Res(f,g)=\prod_{j=1}^l \prod_{k=1}^m  r_j-s_k.$$
The coefficient $a_i$ of $f$ is a symmetric polynomial in the roots of $f$ which is homogeneous
of degree $l-i$.  Similarly, $b_i$ is a homogenenous polynomial of degree $m-i$ in
the roots of $g$.  We therefore assign the variable $a_i$ a degree $l-i$ and the
variable $b_i$ the degree $m-i$.  With respect to these degrees, the resultant 
$Res(f,g)$ is a homogeneous polynomial of degree $l m$.  
On the other hand, the coefficient $a_i$ is a polynomial in
$x_2$ of degree at most $l-i$ and the coefficient $b_i$ is a polynomial in $x_2$ of
degree at most $m-i$.  Therefore, $Res(f,g; x_1)$ is a polynomial of $x_2$ of degree
at most $l m$. \end{proof}

Proposition \ref{Bezoutmain} is the main point in the proof of the celebrated Bezout
theorem.

\begin{corollary} \label{Bezout} (Bezout's theorem) Let $f$ and $g$ be elements of 
${\Bbb C}[x_1,x_2]$ and suppose that $f$ and $g$ have positive 
degrees $l$ and $m$ respectively.
Suppose there are more than $lm$ points of ${\Bbb C}^2$ where $f$ and $g$ both vanish. Then
$f$ and $g$ have a common factor. \end{corollary}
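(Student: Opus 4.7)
The plan is to prove the contrapositive: if $f$ and $g$ share no common factor, then they have at most $lm$ common zeros in $\mathbb{C}^2$. The two tools available are Theorem \ref{factor}, which equates ``common factor'' with ``resultant identically zero,'' and Proposition \ref{Bezoutmain}, which bounds the degree of the resultant by $lm$ provided $f$ and $g$ realize their total degrees as degrees in $x_1$. The main obstacle is that the hypotheses of Proposition \ref{Bezoutmain} need not hold, and, even when they do, many common zeros might share the same $x_2$-coordinate so that the root count of the resultant does not directly bound the number of common zeros. Both obstacles I would clear by a single generic linear change of coordinates on $\mathbb{C}^2$.

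Suppose for contradiction that $f,g$ share no common factor but admit distinct common zeros $P_1,\dots,P_{lm+1}$. Apply a generic invertible affine change of coordinates to pass from $f,g$ to $\tilde f,\tilde g$, and note that total degrees and the property of sharing a common factor are both preserved. For generic choice, I would verify two conditions: (a) the coefficient of $x_1^l$ in $\tilde f$ and the coefficient of $x_1^m$ in $\tilde g$ are nonzero (these are constants rather than polynomials in $x_2$, because a total-degree-$l$ monomial that is a pure power of $x_1$ contains no $x_2$, and the nonvanishing of each is a Zariski-open condition on the change of coordinates); (b) the images $\tilde P_1,\dots,\tilde P_{lm+1}$ have pairwise distinct $x_2$-coordinates $\beta_1,\dots,\beta_{lm+1}$. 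Each condition excludes a proper algebraic subset of $\mathrm{GL}_2(\mathbb{C})$ (possibly composed with a translation), so the intersection remains nonempty.

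Now set $R(x_2):=Res(\tilde f,\tilde g;x_1)$. By Theorem \ref{factor}, $R$ is not identically zero, and by Proposition \ref{Bezoutmain}, $\deg R \leq lm$. Because the leading coefficients of $\tilde f$ and $\tilde g$ in $x_1$ are nonzero constants (condition (a)), the specialization $R(\beta_i)$ coincides with the one-variable resultant $Res(\tilde f(x_1,\beta_i),\tilde g(x_1,\beta_i))$; this vanishes because the two univariate polynomials share the root given by the first coordinate of $\tilde P_i$. By condition (b), the $\beta_i$ are all distinct, so $R$ has at least $lm+1$ roots, contradicting the degree bound. The only technically delicate step is the genericity verification and the fact that specializing a resultant of two polynomials with constant leading coefficients recovers the univariate resultant of the specialized polynomials; once these are in hand, the corollary follows directly from combining Theorem \ref{factor} and Proposition \ref{Bezoutmain}.
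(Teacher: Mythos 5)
Your proof is correct and follows essentially the same route as the paper: a generic linear change of coordinates to make $f,g$ monic-in-$x_1$ of degrees $l,m$ with the $lm+1$ common zeros having distinct $x_2$-coordinates, then Proposition \ref{Bezoutmain} to bound $\deg Res(f,g;x_1)\le lm$, and Theorem \ref{factor} to conclude. Your version spells out the two points the paper leaves implicit (why each $\beta_i$ is a root of the resultant via the specialization property, and why the required coordinate change exists as an intersection of nonempty Zariski-open conditions), which is a slightly more careful rendering of the same argument.
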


\begin{proof} We begin by changing coordinates.  We may choose $x_1$ and $x_2$
so that $f$ has degree $l$ in $x_1$ and $g$ has degree $m$ in $x_1$.  We may also guarantee
that there are more than $l m$ distinct values of $x_2$ among the points where $f$ and $g$ both
vanish.  A generic choice of coordinates accomplishes these goals.  By Proposition \ref{Bezoutmain},
we see that $Res(f,g;x_1)$ is a polynomial of $x_2$ with degree at most $l m$.  Since it vanishes
at more than $l m$ points, it must vanish identically.  An application of Theorem \ref{factor}
completes the proof. \end{proof}

We shall need a small generalization of corollary \ref{Bezout} that works in ${\Bbb C}^3$ when two polynomial
vanish simultaneously on many lines.

\begin{corollary} \label{BezoutlinesC} Let $f$ and $g$ be elements of 
${\Bbb C}[x_1,x_2,x_3]$ and suppose that $f$ and $g$ have positive 
degrees $l$ and $m$ respectively. Suppose there are more than $lm$ lines on which
$f$ and $g$ simultaneously vanish identically. Then $f$ and $g$ have
a common factor. \end{corollary}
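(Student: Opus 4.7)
The plan is to mimic the proof of Corollary \ref{Bezout}, replacing points in ${\mathbb{C}}^2$ by lines in ${\mathbb{C}}^3$ and reducing to a two-variable resultant on the $(x_2,x_3)$-plane.

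First I would choose a generic linear coordinate system on ${\mathbb{C}}^3$ in which three conditions hold simultaneously: (i) the coefficient of $x_1^l$ in $f$ and the coefficient of $x_1^m$ in $g$ are nonzero constants, accomplished by choosing the $x_1$-axis outside the zero loci of the top-degree homogeneous parts of $f$ and $g$; (ii) none of the given lines is parallel to the $x_1$-axis; (iii) the projections of the given lines to the $(x_2,x_3)$-plane are pairwise distinct lines. Each condition excludes only a proper Zariski-closed set in the space of coordinate changes, so all three can be arranged at once.

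Next I would form $R(x_2,x_3) := Res(f,g;x_1)$ and reprise the weighted-homogeneity argument behind Proposition \ref{Bezoutmain}, but with the single ``output'' variable $x_2$ replaced by $(x_2,x_3)$: assigning the coefficient of $x_1^i$ in $f$ weight $l-i$ and the coefficient of $x_1^j$ in $g$ weight $m-j$, these coefficients are polynomials in $(x_2,x_3)$ of total degree at most their assigned weights, so $R$ has total degree at most $lm$ in $(x_2,x_3)$. For each given line $\ell$ and any point $(x_2^0,x_3^0)$ in the projected line $\ell'$, the corresponding point on $\ell$ provides a shared $x_1$-root of $f(\,\cdot\,,x_2^0,x_3^0)$ and $g(\,\cdot\,,x_2^0,x_3^0)$; by (i) the leading coefficients do not vanish, so the defining property of the resultant gives $R(x_2^0,x_3^0)=0$. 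By (ii) and (iii), the $\ell'$ constitute more than $lm$ distinct lines on which $R$ vanishes.

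A nonzero polynomial of total degree $d$ on ${\mathbb{C}}^2$ has at most $d$ linear factors, hence vanishes on at most $d$ lines; so $R \equiv 0$, and Theorem \ref{factor} extracts a common factor of $f$ and $g$. The one genuinely finicky step is arranging (i)--(iii) simultaneously by a single generic coordinate change; beyond that bookkeeping, the proof is the natural three-dimensional analog of the planar resultant argument and introduces no new ideas.
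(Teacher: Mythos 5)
Your proof is correct, but it takes a genuinely different route from the paper's. The paper fixes $x_3=c$ for each $c$, applies the two-variable Bezout theorem (Corollary \ref{Bezout}) together with Theorem \ref{factor} to the restricted polynomials $f|_{x_3=c}$, $g|_{x_3=c}$ to conclude that $Res(f,g;x_1)(\cdot, c)\equiv 0$ for each slice, and then infers that $Res(f,g;x_1)$ vanishes identically. You instead treat $R(x_2,x_3)=Res(f,g;x_1)$ directly as a two-variable polynomial: you extend the weighted-homogeneity argument of Proposition \ref{Bezoutmain} to show $\deg R \le lm$, observe that $R$ vanishes on more than $lm$ distinct projected lines in the $(x_2,x_3)$-plane, and invoke the fact that a nonzero two-variable polynomial of degree $d$ vanishes on at most $d$ lines (since each such line contributes a distinct linear factor). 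The paper's slicing argument has the advantage of re-using Corollary \ref{Bezout} as a black box and not requiring any new degree estimate; your argument is arguably more direct and self-contained, at the modest cost of re-deriving the degree bound for the resultant in two auxiliary variables and of arranging the slightly more elaborate genericity conditions (i)--(iii), whereas the paper needs only a transversality assumption on the $x_3=0$ plane (plus some implicit genericity to avoid coincident intersection points and degree drops in the slices). Both are fine; yours eliminates the ``for all $c$'' step but replaces it with a linear-factor count.
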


\begin{proof} Without loss of generality, we may choose $x_1$ so that $f$ and $g$ have
positive degree in $x_1$ and $x_3$ so that the $x_3=0$ plane is transverse to at least
$lm+1$ of the lines of vanishing. Then fixing $x_3$ and apply Bezout's theorem
and theorem \ref{factor}, we get that $Res(f,g;x_1)$ vanishes identically as a function
of $x_2$. Since this happens for all values of $x_3$, we have that $Res(f,g;x_1)$
vanishes identically and therefore applying theorem \ref{factor}, we get the desired
result. \end{proof}

Finally we prove the real analog of corollary \ref{BezoutlinesC}.  The result below is the one
we apply in the proof of our theorems.

\begin{corollary} \label{Bezoutlines} Let $f$ and $g$ be elements of ${\Bbb R}[x_1,
x_2, x_3]$, and suppose that $f$ and $g$ have positive degrees $l$ and $m$
respectively.  Suppose that there are more than $l m$ lines on which $f$ and
$g$ simultaneously vanish identically.  Then $f$ and $g$ have a common factor.
\end{corollary}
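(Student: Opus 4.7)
The plan is to reduce Corollary \ref{Bezoutlines} to its complex counterpart, Corollary \ref{BezoutlinesC}, by complexifying both the polynomials and the lines, and then descending at the end from $\mathbb{C}$ back to $\mathbb{R}$.

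The key elementary observation is that any real line $\ell \subset \mathbb{R}^3$ on which $f$ vanishes identically is contained in a complex line $\ell_\mathbb{C} \subset \mathbb{C}^3$ on which $f$ still vanishes identically. Indeed, parameterize $\ell$ by $t \mapsto p + tv$ with $p, v \in \mathbb{R}^3$; the composition $t \mapsto f(p+tv)$ lies in $\mathbb{R}[t]$ and vanishes for every real $t$, so it is the zero polynomial in $\mathbb{R}[t] \subset \mathbb{C}[t]$, hence vanishes for every $t \in \mathbb{C}$. Distinct real lines give distinct complex lines, so the hypothesized $> lm$ real lines yield $> lm$ complex lines on which both $f$ and $g$ vanish when viewed as elements of $\mathbb{C}[x_1, x_2, x_3]$. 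Corollary \ref{BezoutlinesC} then furnishes a common factor $h \in \mathbb{C}[x_1, x_2, x_3]$ of $f$ and $g$ of positive degree.

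The remaining task is to promote this to a real common factor. I may assume $h$ is irreducible in $\mathbb{C}[x_1, x_2, x_3]$. Writing $h = p + iq$ with $p, q \in \mathbb{R}[x_1, x_2, x_3]$ and applying complex conjugation to the relations $h \mid f$ and $h \mid g$, the conjugate $\bar h = p - iq$ also divides the real polynomials $f$ and $g$. If $h$ and $\bar h$ are associate in $\mathbb{C}[x_1, x_2, x_3]$, then $h$ is a scalar multiple of a real polynomial, and one of $p$ or $q$ serves as a real common factor. Otherwise $h$ and $\bar h$ are non-associate irreducibles, and unique factorization in the UFD $\mathbb{C}[x_1, x_2, x_3]$ gives $h \bar h \mid f$ and $h \bar h \mid g$; since $h \bar h = p^2 + q^2$ has real coefficients, this is the desired real common factor.

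There is no genuine obstacle in this proof: the complexification step is completely elementary, and the step that demands the most care is the descent from $\mathbb{C}$ to $\mathbb{R}$, where one has to invoke the UFD structure of $\mathbb{C}[x_1, x_2, x_3]$ to deduce $h \bar h \mid f$ from the individual divisibilities.
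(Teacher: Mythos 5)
Your proof is correct and follows essentially the same route as the paper: complexify, invoke Corollary \ref{BezoutlinesC} to obtain an irreducible common factor $h$ over $\mathbb{C}$, and then descend to $\mathbb{R}$ by considering $h$ together with $\bar h$. You are slightly more careful than the paper in two places — explicitly verifying that a real line of vanishing extends to a complex line of vanishing, and explicitly separating the case where $h$ and $\bar h$ are associate from the case where they are not — but the idea is identical.
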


\begin{proof} We can think of $f$ and $g$ as elements of ${\Bbb C}[x_1, x_2, x_3]$, and
they must vanish on more than $l m$ complex lines in ${\Bbb C}^3$.  By corollary \ref{BezoutlinesC},
$f$ and $g$ must have a common factor $h$ in ${\Bbb C}[x_1, x_2, x_3]$.  We can assume
$h$ is irreducible.  A priori,
the polynomial $h$ may or may not be real.  But, if $h$ is non-real, then the irreducible
factorization of $f$ must contain both $h$ and $\bar h$.  Hence $f$ is divisible by
the real polynomial $h \bar h$.  Similarly, $g$ is divisible by $h \bar h$. \end{proof}

We take this moment to state an additional algebraic proposition which we will
use in what follows.

\begin{proposition} \label{yeswecan} Let $X$ be a set of $N$ points in ${\Bbb R}^3$.
Then there is a nontrivial polynomial in ${\Bbb R}[x_1,x_2,x_3]$ ({\it a fortiori} in 
${\Bbb C}[x_1,x_2,x_3]$) which vanishes on every point of $X$ of degree less than
$C N^{{1 \over 3}},$ with $C$ a universal constant.
\end{proposition}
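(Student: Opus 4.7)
The plan is a standard parameter-counting (linear algebra / pigeonhole) argument. The vanishing of a polynomial at a single point is one linear condition on its coefficients, and the space of polynomials of degree at most $D$ in three variables is a vector space whose dimension grows like $D^3$. Once that dimension exceeds $N$, the system of $N$ vanishing conditions must have a nontrivial solution.

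More precisely, I would first let $V_D \subset \mathbb{R}[x_1,x_2,x_3]$ denote the vector space of polynomials of degree at most $D$. A basis is given by the monomials $x_1^{a}x_2^{b}x_3^{c}$ with $a+b+c \leq D$, so
\[
\dim V_D \;=\; \binom{D+3}{3} \;\geq\; \frac{D^3}{6}.
\]
Next, consider the evaluation map $E: V_D \to \mathbb{R}^N$ sending $p \mapsto (p(x))_{x \in X}$. This is $\mathbb{R}$-linear, and a polynomial vanishes on every point of $X$ precisely when it lies in $\ker E$.

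Now choose $D$ to be the smallest positive integer with $\binom{D+3}{3} > N$; a short calculation shows $D < CN^{1/3}$ for a universal constant $C$ (one can take, e.g., $C = 10$). For this choice, $\dim V_D > N \geq \dim(\mathrm{image}(E))$, so $\ker E$ is nontrivial. Any nonzero element of the kernel is the desired polynomial.

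There is no real obstacle here; the only subtlety is verifying the elementary inequality $\binom{D+3}{3} > N$ for $D$ of order $N^{1/3}$, which is immediate from $\binom{D+3}{3} \geq D^3/6$. The polynomial produced lies in $\mathbb{R}[x_1,x_2,x_3]$ and hence also in $\mathbb{C}[x_1,x_2,x_3]$, as claimed.
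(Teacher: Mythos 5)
Your proof is correct and uses exactly the same parameter-counting argument as the paper: the space of polynomials of degree at most $D$ has dimension $\binom{D+3}{3} \sim D^3/6$, each vanishing condition is one homogeneous linear equation, and an underdetermined homogeneous system has a nontrivial solution. The paper states this more tersely but the substance is identical.
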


\begin{proof} A polynomial of three variables and degree $d$ has 
${(d+3) (d+2) (d+1) \over 6}$ coefficients. Requiring that a polynomial
vanish at a point gives a homogeneous linear equation for the coefficients.
Underdetermined systems of homogeneous linear equations always have nontrivial
solutions. \end{proof}

\vskip.125in

\section{Geometric Preliminaries}

In this section, we will recall some basic facts of the extrinsic geometry
of irreducible algebraic varieties in ${\Bbb R}^3$.

We let $p$ be a nontrivial irreducible polynomial on ${\Bbb R}^3$ of degree
$d>0$. We consider
$$S=\{(x,y,z): p(x,y,z) = 0\}.$$

We say that a point $a \in S$ is {\it critical} if $\nabla p(a)=0$. Otherwise,
we say that $a$ is {\it regular}. (By the implicit function theorem, $S$ is locally
a manifold in a neighborhood of a regular point.) We say a line $l$ is {\it critical}
if $l \subset S$ and every point of $l$ is critical.

\begin{proposition} \label{nottoocritical} The set $S$ contains no more
than $d(d-1)$ critical lines. \end{proposition}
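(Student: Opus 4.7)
The plan is to combine the irreducibility of $p$ with the Bezout-type result Corollary \ref{Bezoutlines}, applied to $p$ together with one of its partial derivatives. The key observation is that on a critical line $l$, not only $p$ but also every component of $\nabla p$ vanishes identically, so $p$ and $\partial p/\partial x_i$ share vast common zero sets in the presence of many critical lines.

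First I would observe that, because we are in characteristic zero, the non-constant polynomial $p$ must have at least one partial derivative that is not identically zero. Pick such an index $i$ and set $q = \partial p/\partial x_i$, so $\deg q \leq d-1$. If $q$ happens to be a non-zero constant then it does not vanish anywhere, so $S$ contains no critical lines at all and the bound $0 \leq d(d-1)$ is trivial; otherwise $q$ has positive degree, which is what is needed to invoke Corollary \ref{Bezoutlines}.

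Next I would note that on any critical line $l \subset S$, both $p$ and $q$ vanish identically by definition of a critical line, and apply Corollary \ref{Bezoutlines}: if the number of critical lines were strictly greater than $d(d-1) \geq d \cdot \deg q$, then $p$ and $q$ would have a common factor in $\mathbb{R}[x_1,x_2,x_3]$. Because $p$ is irreducible, the common factor must be $p$ itself (up to scalar), so $p \mid q$. But $\deg q < \deg p$, which forces $q \equiv 0$, contradicting the choice of $i$.

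I do not expect a serious obstacle: Corollary \ref{Bezoutlines} is the whole engine, and the only thing to verify by hand is the characteristic-zero fact that not all partials of a non-constant real polynomial can vanish, which is immediate. The mildly delicate point is the bookkeeping on the degree of $q$, specifically ensuring that when $q$ is a non-zero constant one does not blindly feed it into Corollary \ref{Bezoutlines} (which requires positive degree); this is handled by the trivial-case split above.
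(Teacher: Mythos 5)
Your proof is correct and takes essentially the same route as the paper: the paper's proof is the one-liner ``apply Corollary \ref{Bezoutlines} to $p$ and a nontrivial component of $\nabla p$; this contradicts irreducibility.'' You have simply filled in the degree bookkeeping and the (harmless) edge case where the chosen partial is a nonzero constant.
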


\begin{proof} Suppose not. We apply corollary \ref{Bezoutlines} to $p$ and a nontrivial
component of $\nabla p$. This contradicts the irreducibility of $p$. \end{proof}

Next, we begin to investigate regular points of $S$. We immediately get the following.

\begin{proposition} \label{tangent} Let $a$ be a regular point of $S$. Let $l$
be a line contained in $S$ which passes through $a$. Then $l \subset T_a S,$ where
$T_a S$ is the tangent plane to $S$ at $a$. \end{proposition}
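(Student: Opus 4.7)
The plan is to reduce the claim to a one-variable calculation by restricting $p$ to the line $l$. Write $l = \{a + tv : t \in {\Bbb R}\}$ for some nonzero direction vector $v \in {\Bbb R}^3$, and consider the univariate polynomial $q(t) = p(a + tv)$. Since $l \subset S$, the polynomial $q$ vanishes identically, so in particular $q'(0) = 0$.

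Next, compute $q'(0)$ by the chain rule: $q'(0) = \nabla p(a) \cdot v$. Thus $\nabla p(a) \cdot v = 0$, which says exactly that $v$ lies in the hyperplane through the origin orthogonal to $\nabla p(a)$. Since $a$ is a regular point of $S$, we have $\nabla p(a) \neq 0$, and the tangent plane is defined by
\[
T_a S = \{a + w : \nabla p(a) \cdot w = 0\}.
\]
Every point of $l$ has the form $a + tv$ with $\nabla p(a) \cdot (tv) = 0$, so $l \subset T_a S$.

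The argument is essentially immediate once one writes down the parametrization, so there is no real obstacle; the only subtlety worth noting is that the definition of $T_a S$ as the zero set of the linear form $w \mapsto \nabla p(a) \cdot w$ at $a$ requires regularity, which is exactly the hypothesis we are given. The fact that $q$ vanishes identically on all of ${\Bbb R}$ (not just on some segment) gives much more than we need — it tells us that $p(a+tv)$ vanishes to infinite order at $t=0$ — but a single derivative suffices for the conclusion.
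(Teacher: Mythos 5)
Your proof is correct, and it is exactly the "immediate" argument the paper has in mind (the paper simply states the proposition without proof, prefacing it with "We immediately get the following"). Restricting $p$ to the parametrized line, differentiating, and reading off $\nabla p(a)\cdot v = 0$ is the intended one-line computation.
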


Let $a$ be a regular point of $S$. We would like to investigate the extrinsic
curvature of $S$ at $a$. That is, we want to understand infinitesimally how the
direction of $\nabla p$ is changing in a neighborhood of $a$ in $S$. We define
$${\bf II}(p)(a)=\{ \nabla_{\nabla p \times e_j} \nabla p \times \nabla p \}_{j=1,2,3},$$
where $\times$ is the cross product of vectors and $e_1,e_2,e_3$ are the standard
basis vectors in ${\Bbb R}^3$. Clearly ${\bf II}(p)$ is a set of three vectors. Thus
it has nine components. Each of the components is a polynomial of degree no
more than $3d-4$.

We will refer to ${\bf II}(p)$ as the algebraic second fundamental form of $S$. (The
geometric second fundamental form is a quadratic form on $T_p S$ obtained by 
differentiating the unit normal vector to $S$ along $S$. However since the algebraic 
fundamental form measures the normal component of the change of $\nabla p$ along
three directions which span $T_p S$, it is easy to see that for any regular point $a$,
all the components of ${\bf II}(p)(a)$ vanish if and only if the second fundamental
form of $S$ vanishes.

We say a regular point $a$ of $S$ is {\it flat} if all the components of
${\bf II}(p)(a)$ vanish. We give a sufficient condition for a regular point $a$ to be
flat.

\begin{lemma} \label{flatpoints}  Let $a$ be a regular point of $S$. Suppose
that $S$ contains three distinct lines all of which intersect at $a$, then
$a$ is a flat point. \end{lemma}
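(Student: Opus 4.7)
The plan is to reduce the vanishing of $\mathbf{II}(p)(a)$ to the vanishing of a single quadratic form on the $2$-dimensional tangent plane $T_aS$, and then exploit the elementary fact that a nonzero binary quadratic form has at most two zero directions, so vanishing on three distinct directions forces it to be identically zero.

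First I would apply Proposition \ref{tangent} to the three given lines; since $a$ is regular, every line through $a$ contained in $S$ lies in $T_aS$, so we obtain three pairwise non-parallel directions $v_1,v_2,v_3 \in T_aS$. For each such direction $v$, the identity $p(a+tv)\equiv 0$ in $t$ gives, upon differentiating twice at $t=0$, the relation $v^T H(p)(a)\, v = 0$, where $H(p)$ is the ordinary Hessian of $p$. Thus the quadratic form $Q(v) := v^T H(p)(a)\, v$ vanishes on three projectively distinct directions in the $2$-plane $T_aS$; restricted to ${\Bbb P}(T_aS) \cong {\Bbb P}^1$ it is a degree-two polynomial with three zeros, hence identically zero. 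By polarization, $v^T H(p)(a)\, w = 0$ for all $v,w \in T_aS$, i.e.\ $H(p)(a)$ maps $T_aS$ into its orthogonal complement $(T_aS)^\perp = {\Bbb R}\,\nabla p(a)$.

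To finish I would unwind the definition of $\mathbf{II}(p)(a)$. For each $j \in \{1,2,3\}$ the vector $\nabla p(a) \times e_j$ is orthogonal to $\nabla p(a)$ and therefore lies in $T_aS$. The directional derivative $\nabla_{\nabla p(a) \times e_j} \nabla p$ at $a$ is exactly $H(p)(a)\bigl(\nabla p(a) \times e_j\bigr)$, which by the previous paragraph is a scalar multiple of $\nabla p(a)$. Crossing with $\nabla p(a)$ then gives $0$, so every one of the nine components of $\mathbf{II}(p)(a)$ vanishes. The only mildly subtle step, which I view as the main (though small) obstacle, is the bookkeeping that translates the geometric statement ``the second fundamental form vanishes as a quadratic form on $T_aS$'' into the coordinate-dependent algebraic statement that each cross product $H(p)(a)(\nabla p \times e_j) \times \nabla p$ is zero; this is handled by observing that $\{\nabla p(a) \times e_j\}_{j=1,2,3}$ already spans $T_aS$, which is why testing against these three vectors suffices.
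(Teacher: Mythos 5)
Your argument is correct, and it reaches the same geometric conclusion as the paper — that the Hessian quadratic form at $a$ vanishes on the $2$-plane $T_aS$ because it vanishes on three distinct directions there — but you get there by a slightly different and more self-contained route. The paper packages the key step as an application of Bezout's theorem (Corollary~\ref{Bezoutlines}) to the degree-$2$ Taylor polynomial of $p$ at $a$ and the linear polynomial cutting out $T_aS$: both vanish on the three lines, $3 > 2\cdot 1$, so the linear factor divides the quadratic, i.e.\ the osculating quadric contains the tangent plane. You instead observe directly that the restriction of $v \mapsto v^T H(p)(a)\,v$ to the projective line ${\Bbb P}(T_aS)$ is a binary quadratic form with three distinct projective zeros, hence zero; this is genuinely more elementary, replacing an invocation of Bezout by the fundamental theorem of algebra for degree two. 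You also carry out explicitly the step the paper only sketches in the paragraph defining $\mathbf{II}(p)$, namely that vanishing of the quadratic form on $T_aS$ forces the nine polynomial components $(H(p)(a)(\nabla p(a)\times e_j))\times \nabla p(a)$ to vanish — your polarization argument and the observation that $\{\nabla p(a)\times e_j\}_j$ spans $T_aS$ are exactly what is needed to bridge that gap. The paper's phrasing has the advantage of fitting the Bezout-centric toolkit used throughout the rest of the argument, while yours has the advantage of being entirely elementary and of exhibiting in coordinates why the algebraic second fundamental form is the right object.
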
 

\begin{proof} The quadratic surface which most closely
approximates $S$ at $a$ contains the three lines. 
(This is because the Taylor series of $p$ along the
lines vanishes.) However so does $T_a S$. Thus since $3>2$ by corollary \ref{Bezoutlines}
the quadratic surface most closely approximating $S$ must contain $T_a S$. Therefore,
the second fundamental form of $S$ vanishes at $a$. \end{proof}

We say that a line $l$ is flat if it is contained in $S$, it is not critical,
and every regular point of $l$ is flat.

\begin{corollary} \label{it'saplane!} Let $p$ be an irreducible polynomial of degree
$d>0$. Let $S$ be the zero set of $p$. Suppose that $S$ contains more than $3d^2-4d$
flat lines, then $S$ is a plane. \end{corollary}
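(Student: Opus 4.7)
The plan is to leverage the degree bound on the algebraic second fundamental form to show, via Corollary \ref{Bezoutlines}, that ${\bf II}(p)$ vanishes identically on $S$; this will imply that every regular point of $S$ is flat, which in turn forces $S$ to be a plane.

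First I would argue that each of the nine components of ${\bf II}(p)$, each a polynomial of degree at most $3d-4$, vanishes identically on every flat line $l$. By definition of a flat line, $l \subset S$ is not critical and every regular point of $l$ is flat, so each component of ${\bf II}(p)$ vanishes at every regular point of $l$. The critical points on $l$ form a proper closed subvariety of the affine line $l$ and are therefore finite in number, so restricting a component of ${\bf II}(p)$ to $l$ yields a polynomial in one variable with infinitely many zeros, which must be the zero polynomial.

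Next I would apply Corollary \ref{Bezoutlines} to $p$ together with any component $q$ of ${\bf II}(p)$ of positive degree. Both vanish simultaneously on the more than $3d^2-4d = d(3d-4) \geq (\deg p)(\deg q)$ flat lines, so they must share a common factor. Since $p$ is irreducible this forces $p \mid q$, and hence $q$ vanishes on all of $S$. The remaining cases are trivial: if $q$ is the zero polynomial the conclusion is immediate, and $q$ cannot be a nonzero constant since it vanishes on lines. Consequently, every component of ${\bf II}(p)$ vanishes at every regular point of $S$, i.e., every regular point of $S$ is flat.

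Finally, for the geometric conclusion, I would pick any regular point $a \in S$. The remark in the paper identifies the vanishing of ${\bf II}(p)(a)$ with the vanishing of the usual geometric second fundamental form at $a$, so in a connected neighborhood of $a$ in the smooth manifold $S_{\mathrm{reg}}$ the unit normal is constant, and $S$ coincides there with an open subset of the tangent plane $\Pi := T_a S$. Then $p$ vanishes on a two-dimensional open subset of $\Pi$, so $p|_\Pi \equiv 0$, which means the real linear form defining $\Pi$ divides $p$. Irreducibility of $p$ then forces $p$ to be a scalar multiple of that linear form, and hence $S = \Pi$ is a plane. The main obstacle I anticipate is this last step: one must be careful in passing from the local flatness of a connected piece of $S_{\mathrm{reg}}$ near $a$ to a statement about the full zero set $S$ before invoking irreducibility, and in making sure that the algebraic-geometric dictionary between ${\bf II}(p)$ and the geometric second fundamental form is applied only at regular points.
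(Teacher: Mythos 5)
Your proposal is correct and follows essentially the same route as the paper: apply Corollary \ref{Bezoutlines} to $p$ and the components of ${\bf II}(p)$ (using the degree count $\deg {\bf II}(p) \le 3d-4$ against the more than $3d^2-4d$ flat lines) to conclude $p$ divides each component, hence the normal direction is constant on $S_{\mathrm{reg}}$, hence $S$ contains and therefore equals a plane by irreducibility. You merely spell out details the paper leaves implicit — that a non-critical line has only finitely many critical points so ${\bf II}(p)$ restricted to it vanishes identically, the degenerate cases where a component of ${\bf II}(p)$ is constant, and the local-to-global step from constant normal near one regular point to $S$ being a plane.
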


\begin{proof} By corollary \ref{Bezoutlines}, each component of ${\bf II}(p)$
has $p$ as a factor. Therefore the direction of the normal to $S$ at regular points
of $S$ does not vary. Therefore $S$ contains a plane. But $p$ is irreducible so
$S$ is a plane. \end{proof}

\vskip.125in

\section{Analytic preliminaries}

In the following two sections we will prove  ``big oh" results by contradiction.
Thus we will make an assumption involving a large constant $K$. We keep track
of anything that depends on $K$. However, we ignore constants which are independent
of $K$. Thus we write
$$A \lesssim B$$
if $A$ and $B$ are quantities and $C$ is a universal constant.

There are two variants of the pigeonhole principle which we will use frequently.
The first is often referred to as the popularity lemma.

\begin{lemma} \label{popular} Let $(X,Y,E)$ be a bipartite graph with $E$ the edges and
$X$ and $Y$ the two sets of vertices. Suppose that $|E| > \rho |Y|.$ Let
$Y^{\prime}$ be the set of vertices of $Y$ having degree at least $\mu$ and let
$E^{\prime}$ be the set of edges in $E$ between $Y^{\prime}$ and $X$
Then
$$|E^{\prime}| > (\rho-\mu)|Y|.$$
\end{lemma}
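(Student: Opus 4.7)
The plan is to prove the contrapositive contribution bound: the ``unpopular'' vertices of $Y$ (those of degree less than $\mu$) cannot account for too many edges, so the bulk of $E$ must be incident to $Y'$. Since the graph is bipartite, every edge in $E$ has exactly one endpoint in $Y$, and this endpoint is either in $Y'$ or in its complement.

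First I would set $Y'' = Y \setminus Y'$, the set of vertices of $Y$ whose degree is strictly less than $\mu$, and let $E''$ denote the edges of $E$ incident to $Y''$. Then by definition of degree,
\[
|E''| = \sum_{v \in Y''} \deg(v) < \mu \, |Y''| \leq \mu \, |Y|.
\]

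Next I would observe that $E = E' \sqcup E''$ as a disjoint union, again because the bipartite structure forces each edge to have a unique endpoint in $Y$. Combining this with the hypothesis $|E| > \rho |Y|$ gives
\[
|E'| = |E| - |E''| > \rho |Y| - \mu |Y| = (\rho - \mu) |Y|,
\]
as desired.

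There is no real obstacle here; the lemma is a one-line double-counting argument, and the only thing to be careful about is to ensure bipartiteness is used to partition $E$ according to endpoints in $Y$. The estimate is stated with a strict inequality, which follows from the strict inequality in the hypothesis $|E| > \rho|Y|$ surviving the subtraction of the non-strict bound on $|E''|$.
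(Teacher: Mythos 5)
Your proof is correct and is essentially the paper's proof, just written out in more detail: the paper's one-line argument is precisely the observation that vertices in $Y\setminus Y'$ contribute at most $\mu|Y|$ edges, after which the conclusion follows by subtraction from the hypothesis $|E|>\rho|Y|$.
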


\begin{proof} The vertices in $Y \backslash Y^{\prime}$ are incident to at most
$\mu |Y|$ edges. \end{proof}

We now give the second which we'll refer to freely as the pigeonhole principle.

\begin{lemma} \label{pigeonhole} Let $x_1,\dots , x_m$ be positive quantities and
$y_1,\dots, y_m$ positive quantities, then there is  an integer $1 \leq k \leq m$
so that
$${x_k \over y_k} \geq {\sum_{j=1}^m x_j \over \sum_{j=1}^m y_j}.$$
\end{lemma}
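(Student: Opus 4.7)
The plan is to prove this by a short averaging/pigeonhole argument, which is essentially the observation that the maximum of a collection of positive numbers dominates any weighted average of them. I would write $S = \sum_{j=1}^m x_j$ and $T = \sum_{j=1}^m y_j$, so the claim is that some ratio $x_k/y_k$ is at least $S/T$.

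My first approach would be a direct proof by contradiction. Assume, to derive a contradiction, that $x_k/y_k < S/T$ for every $k$. Since $y_k > 0$, I can multiply through to obtain $x_k < (S/T)\, y_k$ for each $k$. Summing this inequality over $k = 1, \dots, m$ collapses to $S < (S/T) \cdot T = S$, an immediate contradiction. Hence at least one index $k$ must satisfy $x_k/y_k \geq S/T$, as required.

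An equivalent sketch, which I might prefer for transparency, is to write
\[
\frac{S}{T} \;=\; \sum_{k=1}^m \frac{y_k}{T} \cdot \frac{x_k}{y_k},
\]
exhibiting $S/T$ as a convex combination of the ratios $x_k/y_k$ with strictly positive weights $y_k/T$ summing to $1$. The maximum of any finite collection of real numbers is at least any of their convex combinations, so $\max_k (x_k/y_k) \geq S/T$, and any index achieving this maximum works.

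I do not expect any genuine obstacle: the statement is a standard weighted mean inequality and requires no structural hypotheses on the $x_k$ or $y_k$ beyond positivity. The only mild point to keep track of is that the conclusion is a non-strict inequality, which is exactly what makes the contradiction-form argument close without edge cases (equality throughout in the contradiction hypothesis is ruled out just as strict inequality throughout is).
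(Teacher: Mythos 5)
Your first argument is exactly the paper's proof: assume for contradiction that $x_k/y_k$ is strictly below the ratio of sums for every $k$, multiply through by $y_k>0$, sum over $k$, and obtain $S < S$. The convex-combination reformulation you add is a correct equivalent restatement but not a different route; the paper simply gives the contradiction version.
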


\begin{proof} Suppose not. Let $\rho={\sum_{j=1}^m x_j \over \sum_{j=1}^m y_j}.$
Then $x_k < \rho y_k$ for all $k$. Thus
$$\sum_{j=1}^m x_j < \rho \sum_{j=1}^m y_j,$$
which is a contradiction. \end{proof}

\section{Joints Problem}

In this section, we prove theorem \ref{joints}.

We suppose that we are given a set of lines $L$ of cardinality $N$. Let $J$ be the
set of joints determined by $L$. We suppose 
$$|J| \geq K N^{{3 \over 2}},$$
with $K$ a large, but universal, constant.

We create a bipartite, three-colored graph $(L,J,R,G,B)$ between the set of lines and
the set of joints. Each joint is incident to at least three noncoplanar lines. For
each joint, we arbitrarily color one of the incidences red, one green, and one blue.
The sets $R,G,$ and $B$ consist of, respectively, the red, green, and blue incidences.

We will now refine the sets slightly. We let $L_R$ be the set of all lines
which have at least ${K \over 1000} N^{{1 \over 2}}$ incidences in $R$. We 
let $J_R$ be the set of joints having a red incidence with a line of $L_R$.
By lemma \ref{popular},
$$|J_R| \geq {999 \over 1000} |J|.$$
Now we let $L_G$ and $L_B$ be those lines having respectively at least 
${K \over 1000} N^{{1 \over 2}}$ green or blue incidences with joints in $J_R$.
We let $J^{\prime}$ denote that set of joints which has red, green, and blue
incidences with lines in $L_G$ and $L_B$ and by lemma \ref{popular},
it is easy to show that
$$|J^{\prime}| \geq {99 \over 100} |J|.$$

Our goal now is to produce a polynomial of relatively low degree vanishing on
all the points of $J^{\prime}$. (Any degree which is substantially lower
than $N^{{1 \over 2}}$ will suffice. We say a line $l$ of $L_G$ or $L_B$ meets
a line $l^{\prime}$ of $L_R$ if $l \cap l^{\prime}$ is a joint of $J_R$. Each
line of $L_G$ and each line of $L_B$ meets 
at least ${K \over 1000} N^{{1 \over 2}}$ lines of $L_R$. We now take a random
subset $L_R^{\prime}$ of the lines of $L_R$, picking each line with probability
${1 \over K}$. 

By the law of large numbers, with positive probability, the following events occur:
Each line of $L_G$ and $L_B$ meets at least ${1 \over 2000}N^{{1 \over 2}}$ 
lines of $L_R^{\prime}$
and
$$|L_R^{\prime}| \leq {2N \over K}.$$

We make a set of points $S$ by selecting ${1 \over 2} N^{{1 \over 2}}$ points on each 
line of $L_R^{\prime}$. Then
$$|S| \leq {N^{{3 \over 2}} \over K}.$$
We find a polynomial $p$ which vanishes on all points of $S$. By the estimate
on the size of $S$, we may select $p$ with degree $d$ which is 
$\lesssim {N^{{1 \over 2}} \over K^{{1 \over 3}}}$. With $K$ sufficiently large, this
means that $p$ must vanish on each line of $L_R^{\prime}$ and because of the number
of lines of $L_R^{\prime}$ that each line of $L_G$ and $L_B$ meet, it means
that $p$ must vanish identically on each line of $L_G$ and $L_B$. Therefore,
the polynomial $p$ must vanish on the entire set $J^{\prime}$.

Now, it is not necessarily the case that $p$ is irreducible. Thus we factor $p$
into irreducibles
$$p=\prod_{j=1}^m p_j.$$
We denote the degree of the polynomial $p_j$ by $d_j$ and observe that
$$\sum_{j=1}^m d_j \lesssim {N^{{1 \over 2}} \over K^{{1 \over 3}}}.$$
We let $J_j$ be the subset of $J^{\prime}$ on which $p_j$ vanishes, and we
have
$$\sum_{j=1}^m |J_j| \gtrsim  K N^{{3 \over 2}}.$$
Thus by lemma \ref{pigeonhole}, we find $j$ for which
$$|J_j| \gtrsim K^{{4 \over 3}} N d_j.$$
>From now on, we restrict our attention to $J_j$.

We denote by $L_{R,j}$,$L_{G,j}$, and $L_{B,j}$ those lines in $L_R,L_B,$
and $L_G$ which are incident to at least $d_j+1$ elements of $J_j$,
and we let $J_j^{\prime}$ be those element of $J_j$ incident to a line each from
$L_{R,j}$,$L_{G,j}$, and $L_{B,j}$. With $K$ sufficiently large, we have
$$|J_{j}^{\prime}| \geq {999 \over 1000} |J_j|.$$

We now define $L_{R,j}^{\prime}$,$L_{G,j}^{\prime}$, and $L_{B,j}^{\prime}$ as the
set of lines which are incident to more than $d_j+1$ points of $J_j^{\prime}$.
We define $J_j^{\prime \prime}$ to be the set of joints defined by these lines.
We have
$$|J_j^{\prime \prime}| \geq {99 \over 100} |J_j|.$$

We now break into two cases. In the first case, there are fewer than $d_j^2$ lines
in each of $L_{R,j}^{\prime}$,$L_{G,j}^{\prime}$, and $L_{B,j}^{\prime}$. In this
case, we start over again, having a joints problem with fewer lines and more favorable
exponents than the original.

In the second case, we may assume without loss of generality that  $L_{R,j}^{\prime}$
contains at least $d_j^2$ lines. By the definition of $L_{R,j},L_{G,j},$ and
$L_{B,j}$, the polynomial $p_j$ vanishes identically on each line in these sets.
However, this implies that each point of $J_j^{\prime}$ is a critical point of $p_j$,
because otherwise it would be impossible for $p_j$ to vanish on three, intersecting,
non-coplanar lines. But this implies that each component of the gradient of $p_j$
vanishes at each point of $J_j^{\prime}$. Let $q$ be one of the components of
the gradient which does not vanish identically. Then $q$ has degree at most $d_j-1$.
Thus, it must vanish on every line of $L_{R,j}^{\prime}$. But this is a contradiction
by proposition \ref{nottoocritical}.

\vskip.125in

\section{Bourgain's Incidence Problem}

\vskip.125in

In this section, we prove theorem \ref{Bourgain}. We suppose we have a set of points
$X \subset {\Bbb R}^3$ of cardinality ${N^3 \over K}$, with 
$K$ large to be specified later and a set $L$
of $N^2$ lines so that no $N$ lines lie in the same plane and so that each line $l \in L$
is incident to at least $N$ points of $X$. We may assume in what follows
that each line is incident
to exactly $N$ points by coloring $N$ of its incidences black and only counting black
incidences below.

We say that a point $x \in X$ is {\it valuable} if it is incident to at least
${K \over 1000}$ lines. We define $v(x)$ the value of $x$ to be the number of lines it
is incident to. We let $X_v$ be the set of valuable points.
Clearly
$$|X_v| \leq {1000 N^3 \over K},$$
and by lemma \ref{popular}
$$\sum_{x \in X_v} v(x) \geq {999 N^3 \over 1000}.$$

We now perform some dyadic pigeonholing to uniformize the quantity $v(x)$.
We define $X_j$ to be the set of those $x \in X_v$ so that
$$ {2^{j-1} K \over 1000} \leq v(x) < {2^j K \over 1000}.$$
We define 
$$V_j= \sum_{x \in X_j} v(x).$$
Then note that
$$\sum_{j=1}^{\infty} V_j = \sum_{x \in X_v} v(x) \geq {999 N^3 \over 1000}.$$
Note also that
$$\sum_{j=1}^{\infty} {1 \over j^2} ={\pi^2 \over 6} < 2.$$
Hence, by the pigeonhole principle, we can fix a positive number $j$ so that
$$ V_j \geq {999 N^3 \over 2000 j^2}.$$
(In the argument below, the reader is encouraged to assume that $j=1$, since
this is indeed the worst case.)

>From this we see that
$${N^3 \over K 2^j} \gtrsim  |X_j| \gtrsim {N^3 \over K 2^j j^2}.$$
Next we find a polynomial $p$ which vanishes on every point of $X_j$.
We may choose $p$ to have degree $d$ satisfying
$$d \lesssim {N \over K^{{1 \over 3}} 2^{{j \over 3}}}.$$
The polynomial $p$ need not be irreducible. Thus we factor it
$$p=p_1 p_2 \dots p_m,$$
with $p_k$ having degree $d_k$. We let $X_{j,k}$ be the set of points of $X_j$ on
which $p_k$ vanishes.
Clearly, we have
$$d_1 + d_2 + \dots + d_m \lesssim {N \over K^{{1 \over 3}} 2^{{j \over 3}}},$$
while
$$|X_{j,1}| + |X_{j,2}| + \dots + |X_{j,m}| \gtrsim |X_j| \gtrsim {N^3 \over K 2^j j^2}.$$
Thus by the pigeonhole principle, we can fix a $k$ with 
$$|X_{j,k}| \gtrsim {N^2 d_k \over K^{{2 \over 3}} 2^{{2j \over 3}} j^2}.$$
We let $Y=X_{j,k}$ and by the definition of $X_j$, if $I$ is the number of incidences
between $L$ and $Y$, we have
$$I \gtrsim N^2 d_k K^{{1 \over 3}} 2^{{j \over 3}} j^{-2} >> N^2 d_k.$$
We let $L^{\prime}$ be the set of lines incident to more than $100d_k$ points of $Y$
and let $I^{\prime}$ be the number of incidences between $L^{\prime}$ and $Y$. Then
clearly
$$I^{\prime} \gtrsim I.$$
Note that each line of $L^{\prime}$ is in the zero set of $p_k$. Now let
$Y^{\prime}$ be the set of points of $Y$ incident to more than 3 lines of
$L^{\prime}$. Then each point of $Y^{\prime}$ is either a critical point of
$p_k=0$ or else by lemma \ref{flatpoints}, it must be a flat point of $p_k$.

We let $L^{\prime \prime}$ be the set of lines in $L^{\prime}$ incident to at
least $10 d_k$ points of of $Y^{\prime}$. If $I^{\prime \prime}$ is the number
of incidences between lines of $L^{\prime \prime}$ and points of $Y^{\prime}$, we still
have
$$I^{\prime \prime} \gtrsim N^2 d_k K^{{1 \over 3}} 2^{{j \over 3}} j^{-2}.$$
We let $I_{flat}$ and $I_{crit}$ be the number of those incidences with flat points and
critical points respectively. Note that 
$$I_{crit}+I_{flat} \geq I^{\prime \prime}.$$

There are two cases. In the first case
$$I_{crit} \gtrsim N^2 d_k K^{{1 \over 3}} 2^{{j \over 3}} j^{-2},$$
which means that there are at least $N d_k K^{{1 \over 3}} 2^{{j \over 3}} j^{-2} >> d_k^2$
lines in the surface $p_k=0$ on which are critical. But this is a contradiction in
light of proposition \ref{nottoocritical}.

In the second case
$$I_{flat} \gtrsim N^2 d_k K^{{1 \over 3}} 2^{{j \over 3}} j^{-2},$$
which means that there are at least 
$N d_k K^{{1 \over 3}} 2^{{j \over 3}} j^{-2} >> 3d_k^2$
flat lines in the surface $p_k=0$. In light of corollary
\ref{it'saplane!}
 the surface $p_k=0$ is in fact a plane. But now we have more
than $N$ lines of $L$ lying in a plane which is also a contradiction.

\newpage

\end{document}